\newtheorem{thm}{Theorem}[section]
\newtheorem{cor}[thm]{Corollary}
\newtheorem{lem}[thm]{Lemma}
\newtheorem{rem}[thm]{Remark}
\def\bbb{\mathbb}
\renewcommand{\phi}{\varphi}
\newcommand{\N}{\bbb{N}}
\newcommand{\ITE}[3]{\ifthenelse{#1}{#2}{#3}}\newcommand{\ITEE}[4][]{\ITE{\equal{#2}{#3}}{#4}{#1}}
\newcommand{\myData}[1][]{
	\author[D.\ Burek]{Dominik Burek}
	\author[B.\ \.{Z}mija]{B{\l}a\.{z}ej \.{Z}mija}
	\address{\ITEE{#1}{*}{B.\ \.{Z}mija{}\\{}}
			Faculty of Mathematics and Computer Science\\{} Jagiellonian University\\{}
			\L{}ojasiewicza 6\\{}30-348 Krak\'{o}w\\{}Poland}
	\email{dominik.burek@doctoral.uj.edu.pl}
	\email{blazejz@poczta.onet.pl}
}
\begin{document}

\title[A new upper bound for numbers with the Lehmer property\ldots]{A new upper bound for numbers with the Lehmer property and its application to repunit numbers}
\myData

\begin{abstract}
A composite positive integer $n$ has the \textsl{Lehmer property} if $\phi(n)$ divides $n-1,$ where $\phi$ is an Euler totient function. In this note we shall prove that if $n$ has the Lehmer property, then $n\leq 2^{2^{K}}-2^{2^{K-1}}$, where $K$ is the number of prime divisors of $n$. We apply this bound to repunit numbers and prove that there are at most finitely many numbers with the Lehmer property in the set 
$$
\left\{\frac{g^{n}-1}{g-1}\ \bigg|\ n,g\in\N,\ \nu_{2}(g)+\nu_{2}(g+1)\leq L\ \right\},
$$
where $\nu_{2}(g)$ denotes the highest power of $2$ that divides $g$, and $L\geq 1$ is a fixed real number.
\end{abstract}

\subjclass[2010]{Primary 11N25; Secondary 11B39}
\keywords{Lehmer numbers, repunit, upper bound}
\maketitle

\section{Introduction}

In 1932 Lehmer conjectured that if $\phi(n)\mid n-1$, then $n$ has to be a prime number. A composite positive integer satisfying that divisibility is called \textsl{Lehmer number} or number with \textsl{the Lehmer property}.

No Lehmer number is known, although there are some partial results. Pomerance showed in \cite{Pom} that if $n$ satisfies the Lehmer property, then $n$ is squarefree and $n<K^{2^{K}}$, where $K$ denotes the number of prime divisors of $n$. Moreover Renze in \cite{Re} gave a bound $K\geq 15$. If additionally $3\mid n,$ then $K \geq 40\cdot 10^6$ and $n> 10^{36\cdot 10^7}.$

The upper bound given by Pomerance was a crucial step in proofs of many results concerning the existence of Lehmer numbers in certain sequences, such as the Fibonacci sequence \cite{Luca}, Pell numbers \cite{FaLu} or Cullen numbers \cite{GrauLu,KimOh}. 

In this note we prove that $n\leq 2^{2^{K}}-2^{2^{K-1}}$. This new bound allows us to get new results about repunit numbers with Lehmer property, the topic that was studied earlier by Cilleruelo and Luca in \cite{CiLuca}.

\subsection*{Acknowledgments} We are grateful to Prof. S{\l}awomir Cynk and Prof. Maciej Ulas for their helpful remarks and suggestions.

\section{A new upper Bound}

The key ingredient in the proof of the upper bound for numbers with the Lehmer property is the following lemma given by Nielsen in his proof of the bound for odd perfect numbers.

\begin{lem}[Nielsen, \cite{Niels}]
Let $r,a,b\in \N$ and $x_{1},\ldots ,x_{r}$ be integers such that $1<x_{1}<x_{2}<\ldots <x_{r}$ and
\begin{equation}\label{equ1}
\prod_{j=1}^{r}\left(1-\frac{1}{x_{j}}\right)\leq \frac{a}{b}<\prod_{j=1}^{r-1}\left(1-\frac{1}{x_{j}}\right).
\end{equation}
Then
\begin{equation}\label{equ2}
a\prod_{j=1}^{r}x_{j}\leq (a+1)^{2^{r}}-(a+1)^{2^{r-1}}.
\end{equation}
\end{lem}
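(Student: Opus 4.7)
\emph{Proof plan.} I would proceed by induction on $r$. For the base case $r = 1$, the hypothesis \eqref{equ1} reduces to $1 - 1/x_1 \leq a/b < 1$, giving $b \geq a+1$ and $x_1 \leq b/(b-a)$. An elementary manipulation shows that $b \geq a+1$ forces $b/(b-a) \leq a+1$ (equivalently $a(a+1) \leq ab$), so $ax_1 \leq a(a+1) = (a+1)^2 - (a+1)$, matching \eqref{equ2} at $r = 1$.

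For the inductive step, write $P = \prod_{j=1}^{r-1} x_j$, $Q = \prod_{j=1}^{r-1}(x_j-1)$, and $N = bQ - aP$. The right-hand inequality in \eqref{equ1} forces the integer $N$ to be at least $1$, while the left-hand inequality, rewritten as $bQ(x_r-1) \leq aPx_r$, yields the key estimate
$$x_r \leq \frac{bQ}{N}, \qquad \text{and hence} \qquad a\prod_{j=1}^{r} x_j \;=\; (aP)\,x_r \;\leq\; \frac{(aP)(bQ)}{N} \;=\; \frac{(bQ-N)\,bQ}{N}.$$

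I would then apply the inductive hypothesis to the truncated sequence $(x_1,\ldots,x_{r-1})$ with the choice $(a',b') = (Q,P)$; this is a valid input, since it makes the left inequality of \eqref{equ1} an equality ($\prod_{j<r}(1-1/x_j) = Q/P = a'/b'$) while the right inequality reduces to $x_{r-1} > 1$. This yields
$$Q\,P \;\leq\; (Q+1)^{2^{r-1}} - (Q+1)^{2^{r-2}}.$$

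The main obstacle is the final algebraic reconciliation: converting the two inequalities above into the target $(a+1)^{2^{r}} - (a+1)^{2^{r-1}} = (a+1)^{2^{r-1}}\bigl((a+1)^{2^{r-1}} - 1\bigr)$. Inspection of the extremal case $(a,b,r) = (2,3,2)$ with $x_1 = 4, x_2 = 9$ — in which $N = 1$, $bQ = 9 = (a+1)^{2^{r-1}}$, and every inequality is tight — suggests that one should track, alongside the bound on $aP_{r-1}$, a companion bound of the form $bQ \leq (a+1)^{2^{r-1}}$ in the critical case $N = 1$. However, $bQ$ can genuinely exceed $(a+1)^{2^{r-1}}$ in less extremal situations (for instance, $(a,b,r) = (1,2,3)$ with $(x_j) = (4,5,6)$ gives $bQ = 24 > 16$, but $N = 4$ saves the day), so the proof cannot bound $bQ$ and $N$ separately: it must exploit the combined quantity $(bQ-N)\,bQ/N$, playing the quadratic growth in $bQ$ off against the size of $N$. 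Arranging this trade-off so that the IH bound on $QP$ feeds through cleanly — likely via the factorization $(a+1)^{2^{r}} - (a+1)^{2^{r-1}} = \bigl((a+1)^{2^{r-1}}\bigr)\bigl((a+1)^{2^{r-1}}-1\bigr)$ and a monotonicity-type comparison — is the delicate core of the argument.
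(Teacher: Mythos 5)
First, a point of reference: the paper itself does not prove this lemma---it is quoted verbatim from Nielsen \cite{Niels} and used as a black box---so there is no in-paper argument to compare against, and your proposal has to stand on its own. The parts you have written out are correct: the base case $r=1$ is complete; the observations that $N=bQ-aP$ is a positive integer (from the right inequality of (\ref{equ1})) and that the left inequality rearranges to $Nx_r\leq bQ$, hence $a\prod_{j=1}^r x_j\leq (aP)(bQ)/N$, are exactly the right opening moves; and the application of the inductive hypothesis to $(a',b')=(Q,P)$ is legitimately within the hypotheses.

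The gap is that the inductive step is never closed, and the specific ingredients you have assembled cannot close it. Writing $M=(a+1)^{2^{r-1}}$, the target is $M(M-1)$, while your bound equals $(aP)^2/N+aP$, which is maximized at $N=1$, where it becomes $aP(aP+1)$; comparing $aP(aP+1)\leq (M-1)M$ shows that in the critical case $N=1$ you need precisely $a\prod_{j=1}^{r-1}x_j\leq (a+1)^{2^{r-1}}-1$, with a quantified relaxation of this for larger $N$. But the inductive hypothesis as you have instantiated it yields $QP\leq (Q+1)^{2^{r-1}}-(Q+1)^{2^{r-2}}$, an inequality in which $a$ and $b$ do not appear at all; it constrains $P$ in terms of $Q$ and cannot produce an $a$-dependent bound on $aP$. (Note also that $a/b<\prod_{j=1}^{k}(1-1/x_j)$ alone never bounds $a\prod_{j\leq k}x_j$: already $a=1$, $b=2$, $k=1$ allows $x_1$ arbitrary.) So the ``delicate core'' you defer---trading the size of $N$ against the size of $bQ$, as your examples $(a,b)=(2,3)$, $(x_1,x_2)=(4,9)$ and $(a,b)=(1,2)$, $(x_j)=(4,5,6)$ correctly illustrate---is not a routine algebraic reconciliation but the entire content of the lemma, and it requires a genuinely stronger inductive statement (or a different induction, as in Nielsen's paper) than the one you have set up. As it stands the proposal is an accurate diagnosis of where the difficulty lies rather than a proof.
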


The following theorem is a simple consequence of the above lemma.

\begin{thm}\label{ThmUp}
If $n$ has the Lehmer property, then
$$
n\leq 2^{2^{K}}-2^{2^{K-1}},
$$ 
where $K$ denotes the number of prime divisors of $n$.
\end{thm}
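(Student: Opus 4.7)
The plan is to apply Nielsen's lemma directly with $r = K$, taking the $x_j$ to be the prime divisors of $n$ in increasing order, and with the fraction $a/b$ chosen to reflect the divisibility $\phi(n)\mid n-1$. The cleanest choice turns out to be $a=1$.

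First, I would invoke Pomerance's result (cited in the introduction) that every Lehmer number is squarefree. Thus I may write $n = p_1 p_2 \cdots p_K$ with primes $p_1 < p_2 < \cdots < p_K$. Since $\phi(n) \mid n-1$ and $n$ is composite (so $K \geq 2$), there is a positive integer $k$ with $n-1 = k \phi(n)$. Dividing by $n$ yields
$$
\prod_{j=1}^{K}\left(1-\frac{1}{p_j}\right) \;=\; \frac{\phi(n)}{n} \;=\; \frac{n-1}{kn} \;<\; \frac{1}{k}.
$$

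Next, I would verify the upper bound in \eqref{equ1} for the choice $a/b = 1/k$. Multiplying the previous product by $p_K/(p_K-1)$ gives
$$
\prod_{j=1}^{K-1}\left(1-\frac{1}{p_j}\right) \;=\; \frac{(n-1)\,p_K}{k\,n\,(p_K-1)},
$$
and this exceeds $1/k$ precisely when $(n-1)p_K > n(p_K-1)$, i.e.\ when $p_K < n$, which is immediate since $K\geq 2$. Consequently the inequalities
$$
\prod_{j=1}^{K}\left(1-\frac{1}{p_j}\right) \;\leq\; \frac{1}{k} \;<\; \prod_{j=1}^{K-1}\left(1-\frac{1}{p_j}\right)
$$
hold, and Nielsen's lemma applies with $r=K$, $a=1$, $b=k$, and $x_j=p_j$. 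Its conclusion reads
$$
n \;=\; \prod_{j=1}^{K} p_j \;\leq\; 2^{2^{K}} - 2^{2^{K-1}},
$$
which is exactly the desired bound.

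There is no real obstacle here: once one has Nielsen's lemma and Pomerance's squarefreeness result, the only insight required is the observation that the specific rational $1/k$ lies in the correct interval, so that $a$ can be taken to equal $1$. The mild verification that $\prod_{j\leq K-1}(1-1/p_j) > 1/k$ reduces to the trivial inequality $p_K < n$, which is the only place compositeness (i.e.\ $K\geq 2$) enters.
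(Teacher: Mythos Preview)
Your proof is correct and is essentially identical to the paper's: both write $n=p_1\cdots p_K$, apply Nielsen's lemma with $r=K$, $x_j=p_j$, $a=1$, and $b=(n-1)/\phi(n)$ (your $k$), and verify the right-hand inequality of \eqref{equ1} via the equivalent observation $p_K<n$. The only cosmetic difference is that the paper cites Cohen--Hagis for squarefreeness (and oddness, which is not actually used) whereas you cite Pomerance.
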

\begin{proof}
It is known (see \cite{CoHa}), that if $n$ satisfies the Lehmer property, then $n$ is odd and square-free. Let us write $n=p_{1}\cdot\ldots\cdot p_{K}$, where $p_{1}<p_{2}<\ldots <p_{K}$. Then
\begin{align*}
\prod_{j=1}^{K}\left(1-\frac{1}{p_{j}}\right)=\frac{\phi (n)}{n}<\frac{\phi (n)}{n-1}.
\end{align*}
Moreover,
\begin{align*}
\frac{\frac{\phi (n)}{n-1}}{\displaystyle \prod_{j=1}^{K-1}\left(1-\frac{1}{p_{j}}\right)}=\frac{n \displaystyle \prod_{j=1}^{K}\left(1-\frac{1}{p_{j}}\right)}{\left(n-1\right) \displaystyle \prod_{j=1}^{K-1}\left(1-\frac{1}{p_{j}}\right)}=\frac{1-\frac{1}{p_{K}}}{1-\frac{1}{n}}<1.
\end{align*}
Thus
\begin{align*}
	\prod_{j=1}^{K}\left(1-\frac{1}{p_{j}}\right)<\frac{\phi (n)}{n-1}<\prod_{j=1}^{K-1}\left(1-\frac{1}{p_{j}}\right).
\end{align*}
Hence, the inequality (\ref{equ1}) is satisfied for $x_{j}=p_{j}$, $r=K$, $a=1$ and $\displaystyle b=\frac{n-1}{\phi (n)}.$ From (\ref{equ2}) we get
\begin{align*}
n=p_{1}\cdot\ldots\cdot p_{K}\leq 2^{2^{K}}-2^{2^{K-1}},
\end{align*}
and theorem follows.
\end{proof}

\section{Repunit Numbers}

The upper bound obtained in the previous section allow us to improve a result concerning repunit numbers, i.e., numbers of the form $\frac{g^{n}-1}{g-1}$ for integers $g\geq 2$ and $n\geq 1.$  The following is true:

\begin{thm}[Cilleruelo, Luca \cite{CiLuca}]\label{CiLu}
	For each fixed $g>1$ there are only finitely many effectively computable positive integers $n$ such that $\displaystyle \frac{g^{n}-1}{g-1}$ is a Lehmer number.
\end{thm}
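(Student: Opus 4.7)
The plan is to combine Theorem \ref{ThmUp} with a careful analysis of $\nu_{2}(N - 1)$, where $N = (g^{n} - 1)/(g-1)$ is the hypothetical Lehmer repunit and $K$ denotes its number of prime divisors. The Lehmer condition forces $N$ to be odd and squarefree, and gives $\phi(N) = \prod_{i=1}^{K}(p_{i} - 1) \mid N - 1$; since every $p_{i} - 1$ is even, $2^{K}$ divides $N - 1$, so $K \leq \nu_{2}(N - 1)$. On the other hand, Theorem \ref{ThmUp} gives $\log_{2} N < 2^{K}$, which together with the trivial lower bound $N \geq g^{n-1}$ yields $2^{K} > (n-1)\log_{2} g$. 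The idea is that if the upper bound $\nu_{2}(N-1)$ on $K$ exceeds $\log_{2}(n-1)$ only by a quantity depending on $g$ alone, then the two sandwich inequalities together force $n$ to be bounded.

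The argument splits according to the parity of $g$. If $g$ is even, then $N - 1 = g(1 + g + \cdots + g^{n-2})$ with the inner sum odd, so $\nu_{2}(N - 1) = \nu_{2}(g)$ and $K \leq \nu_{2}(g)$; for fixed $g$ this bounds $K$, and Theorem \ref{ThmUp} then bounds $N$ (hence $n$) effectively. If $g$ is odd, then $N$ can be odd only when $n$ is odd, and the lifting-the-exponent lemma (in its $p=2$ form for the even exponent $n-1$) gives
\[
\nu_{2}(N - 1) = \nu_{2}(g+1) + \nu_{2}(n-1) - 1.
\]
Substituting into the chain above yields
\[
(n-1)\log_{2} g \leq 2^{\nu_{2}(g+1) - 1} \cdot 2^{\nu_{2}(n-1)},
\]
which, since $2^{\nu_{2}(n-1)} \leq n-1$, rearranges to an upper bound on the odd part of $n - 1$ in terms of $g$. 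For any $g$ with $\log_{2} g \geq 2^{\nu_{2}(g+1) - 1}$ this already gives a contradiction for large $n$; otherwise one is reduced to the residual family $n - 1 = 2^{a}m$ with $m$ a bounded odd integer.

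The main obstacle is this residual family, where the $2$-adic estimate saturates as $a \to \infty$. To eliminate it I would exploit further divisibility consequences of the Lehmer property: for every prime $p \mid N$ the whole of $p-1$ divides $N-1$, so each odd prime $q$ dividing some $p-1$ also divides $N-1$, and $\nu_{q}(N-1)$ may again be computed by lifting-the-exponent. Coupled with Zsygmondy's theorem, which for sufficiently large $n$ provides a primitive prime divisor $p$ of $g^{n}-1$ with $n \mid p-1$ (hence $n \mid N-1$), and with the elementary congruence $N \equiv n \pmod{g-1}$, this yields rigid arithmetic constraints on $n$ which, after one more appeal to Theorem \ref{ThmUp}, should preclude arbitrarily large $a$ in $n = 2^{a}m + 1$. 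Making this step uniform in the bounded parameter $m$ and effective (as required by the statement) is where I expect the bulk of the work, and it is the hardest part of the argument.
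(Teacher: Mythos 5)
There is a genuine gap, and it sits exactly where you suspect. First, note that the paper does not prove this statement at all: Theorem \ref{CiLu} is quoted from Cilleruelo and Luca \cite{CiLuca}, so there is no internal proof to compare against. Judged on its own terms, your argument is sound and complete for even $g$: there $\nu_{2}(N-1)=\nu_{2}(g)$ bounds $K$, and Theorem \ref{ThmUp} then bounds $N$ and hence $n$ effectively. Your odd case is also correct as far as it goes: the identity $\nu_{2}(N-1)=\nu_{2}(g+1)+\nu_{2}(n-1)-1$ is right, and feeding $K\leq\nu_{2}(g+1)+\nu_{2}(n-1)-1$ into $(n-1)\log_{2}g<2^{K}$ yields, after cancelling $2^{\nu_{2}(n-1)}\leq n-1$, only a bound on the \emph{odd part} of $n-1$, namely $2m+1<2^{\nu_{2}(g+1)-1}/\log_{2}g$ where $n-1=2^{s}(2m+1)$. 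The exponent $s$ is left completely unconstrained, so $n$ is not bounded. This is precisely the situation the paper itself reaches in the proof of Theorem \ref{RepOdd} (see also the Remark following it), and tellingly the authors do not close it by elementary means: they invoke Theorem \ref{CiLu} itself for the finiteness in $n$. So the residual family is not a technicality one can wave away; it is the entire content of the theorem in the odd case.

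Your proposed repair --- Zsygmondy primitive divisors giving $n\mid p-1\mid N-1$, the congruence $N\equiv n\pmod{g-1}$, and $q$-adic analogues of the lifting computation --- is only a plan, hedged with ``should preclude'' and ``I expect the bulk of the work,'' and it is not at all clear that these ingredients suffice. For $n=2^{s}(2m+1)+1$ with $m$ bounded, $n$ may well be prime, so the cyclotomic factorization of $g^{n}-1$ contributes essentially nothing to a lower bound on $K$, and the divisibility $n\mid N-1$ is not visibly incompatible with $N-1=g\,(g^{n-1}-1)/(g-1)$. The actual Cilleruelo--Luca argument is a substantial piece of work that goes well beyond the $2$-adic bookkeeping reproduced here. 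As written, the proposal proves the even case and reduces the odd case to its hardest subcase, which remains open in your write-up.
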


For now suppose that $g$ is even integer. Let $L\geq 1$ be a fixed real number and define
$$
A(L)=\left\{a_{g,n}=\frac{g^{n}-1}{g-1}\ \bigg|\ g,n\in\N,\ 1\leq\nu_{2}(g)\leq L \right\}, 
$$ where $\nu_{2}(g)$ denotes the $2$-adic valuation of $g$, that is, the exponent of $2$ in the prime factorization of $g$.
 
We offer the following generalization of Theorem \ref{CiLu}:

\begin{thm}\label{RepEven}
	If $a_{g,n}\in A(L)$ satisfies the Lehmer property, then $g<2^{2^{L}}$ and $L\geq 15$. In particular there are only finitely many Lehmer numbers in $A(L)$.
\end{thm}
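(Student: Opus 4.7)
The plan is to combine Theorem \ref{ThmUp} with a $2$-adic valuation argument that bounds the number of prime divisors of $a_{g,n}$ in terms of $L$. First I would record the identity
\[
a_{g,n}-1=\frac{g^{n}-1}{g-1}-1=\frac{g(g^{n-1}-1)}{g-1}=g\cdot a_{g,n-1},
\]
valid for $n\geq 2$ (the case $n=1$ is excluded since $a_{g,1}=1$ is not a Lehmer number). Because $g$ is even, the sum $a_{g,n-1}=1+g+g^{2}+\ldots+g^{n-2}$ contains exactly one odd summand and is therefore odd, so $\nu_{2}(a_{g,n}-1)=\nu_{2}(g)\leq L$.

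Next I would exploit the Lehmer property together with the facts, recalled in the proof of Theorem \ref{ThmUp}, that $a_{g,n}$ is odd and squarefree. Writing $a_{g,n}=p_{1}\cdots p_{K}$ with all $p_{i}$ odd, each factor $p_{i}-1$ is even, hence
\[
\nu_{2}(\phi(a_{g,n}))=\sum_{i=1}^{K}\nu_{2}(p_{i}-1)\geq K.
\]
Combining this with $\phi(a_{g,n})\mid a_{g,n}-1$ and the previous step yields $K\leq\nu_{2}(a_{g,n}-1)\leq L$.

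Then Theorem \ref{ThmUp} gives $a_{g,n}\leq 2^{2^{K}}-2^{2^{K-1}}<2^{2^{K}}\leq 2^{2^{L}}$, and since $a_{g,n}\geq 1+g>g$ for $n\geq 2$, this forces $g<2^{2^{L}}$. Renze's lower bound $K\geq 15$ together with $K\leq L$ immediately yields $L\geq 15$. Finiteness across $A(L)$ is then automatic: only finitely many even $g$ satisfy $g<2^{2^{L}}$, and for each such $g$ Theorem \ref{CiLu} supplies only finitely many $n$ for which $a_{g,n}$ is a Lehmer number.

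The argument is short rather than deep once the right identity is spotted, so the main subtlety I anticipate is only verifying that $a_{g,n-1}$ is odd, which depends precisely on $g$ being even and explains why $\nu_{2}(g)\geq 1$ appears in the definition of $A(L)$. The genuinely new content is that Theorem \ref{ThmUp} converts the local $2$-adic constraint $K\leq L$ into a uniform global bound $g<2^{2^{L}}$, promoting Theorem \ref{CiLu} from per-$g$ finiteness to finiteness across the entire family $A(L)$.
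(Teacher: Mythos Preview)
Your argument is correct and follows essentially the same route as the paper: the identity $a_{g,n}-1=g\cdot a_{g,n-1}$ with $a_{g,n-1}$ odd gives $K\le\nu_2(g)\le L$, then Theorem~\ref{ThmUp} and Renze's bound finish. Your derivation of $g<2^{2^{L}}$ via $a_{g,n}\ge 1+g>g$ is in fact a bit more direct than the paper's, which instead bounds $n$ by $a_{g,n}>g^{n-1}$ and argues by contradiction; conversely, the paper obtains an explicit bound for $n$ within the proof rather than appealing to Theorem~\ref{CiLu}.
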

\begin{proof}
Consider $g$ and $n$ such that $\displaystyle a_{g,n}\in A(L)$ satisfies the Lehmer property. Then obviously $n\geq 2$. Let $K$ denotes the number of prime factors of $a_{g,n}$. From \cite{Re} we know that $K\geq 15.$

Now
\begin{align*}
2^{K} \mid \phi(a_{g,n})\mid a_{g,n}-1=g\cdot \frac{g^{n-1}-1}{g-1}=g\cdot \left(g\cdot\frac{g^{n-2}-1}{g-1}+1\right).
\end{align*}
Parity of $g$ leads to $2^{K}\mid g$ and thus 
$$
15\leq K\leq \nu_{2}(g)\leq L.
$$ 
By Theorem \ref{ThmUp} we get
\begin{align*}
a_{g,n}<2^{2^{K}}\leq 2^{2^{L}}.
\end{align*}
On the other hand, $a_{g,n}>g^{n-1}$, so
\begin{align*}
n< \frac{2^{L}\log 2}{\log g}+1.
\end{align*}
The right hand side goes to $1$ as $g\to \infty$ and for $g\geq 2^{2^{L}}$ we have $n<2$, so $n=1$ --- a contradiction. Thus $g< 2^{2^{L}}$ and we get the claim.
\end{proof}

In the case of odd numbers $g$ it is more convenient to bound the $2$-adic valuation of $g+1$ instead of $g$. Let $L\geq 1$ and consider the set
$$
B(L):=\left\{b_{g,n}=\frac{g^{n}-1}{g-1}\ \bigg|\ g,n\in\N,\ 1\leq\nu_{2}(g+1)\leq L \right\}.
$$
The following holds:

\begin{thm}\label{RepOdd}
	If $b_{g,n}\in B(L)$ satisfies the Lehmer property, then $g<2^{2^{L-1}}$. In particular there are only finitely many Lehmer numbers in $B(L)$.
\end{thm}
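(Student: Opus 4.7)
The plan is to mirror the argument used for Theorem \ref{RepEven}: extract a lower bound on $\nu_{2}(b_{g,n}-1)$ from the Lehmer property, translate it into arithmetic constraints on $g$ and $n$, and then combine with Theorem \ref{ThmUp} and the trivial inequality $b_{g,n}>g^{n-1}$ to bound $g$.

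I would first pin down the parity of $n$. Lehmer numbers are odd by \cite{CoHa}, and for $g$ odd we have
$$
b_{g,n}=1+g+g^{2}+\ldots +g^{n-1}\equiv n\pmod{2},
$$
so $n$ must be odd and $n-1$ is even. This parity is exactly what is needed to apply the lifting-the-exponent lemma at the prime $2$ to $g^{n-1}-1$. Combined with the observation that the factor $g$ in $b_{g,n}-1=g\cdot\frac{g^{n-1}-1}{g-1}$ contributes nothing to $\nu_{2}$, this produces the key identity
$$
\nu_{2}(b_{g,n}-1)=\nu_{2}(g+1)+\nu_{2}(n-1)-1.
$$

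Since $b_{g,n}$ is odd and squarefree with $K$ prime factors, $2^{K}$ divides $\phi(b_{g,n})$ and hence $b_{g,n}-1$. Feeding this into the identity above, together with $\nu_{2}(g+1)\leq L$, gives
$$
K\leq L-1+\nu_{2}(n-1),\qquad\text{so}\qquad n-1\geq 2^{K-L+1}.
$$
Theorem \ref{ThmUp} supplies $b_{g,n}<2^{2^{K}}$, and combined with $b_{g,n}>g^{n-1}$ this yields $(n-1)\log_{2}g<2^{K}$. Substituting the lower bound $n-1\geq 2^{K-L+1}$ cancels the factor $2^{K}$ and leaves $\log_{2}g<2^{L-1}$, that is $g<2^{2^{L-1}}$.

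Finiteness of Lehmer numbers in $B(L)$ then follows: only finitely many odd $g<2^{2^{L-1}}$ satisfy $\nu_{2}(g+1)\leq L$, and for each such $g$ Theorem \ref{CiLu} provides only finitely many exponents $n$ for which $b_{g,n}$ is a Lehmer number. The one delicate point in this plan is the $p=2$ case of the lifting-the-exponent lemma, where one has to verify that the single formula $\nu_{2}(g^{n-1}-1)=\nu_{2}(g-1)+\nu_{2}(g+1)+\nu_{2}(n-1)-1$ correctly covers both $g\equiv 1\pmod{4}$ and $g\equiv 3\pmod{4}$. Once that identity is in hand, the rest of the argument is a short chain of inequalities paralleling the even-$g$ proof.
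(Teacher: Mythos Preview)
Your proposal is correct and follows essentially the same route as the paper: both arguments establish the key inequality $K\leq\nu_{2}(g+1)+\nu_{2}(n-1)-1$ from $2^{K}\mid b_{g,n}-1$, combine it with Theorem~\ref{ThmUp}, the trivial bound $b_{g,n}>g^{n-1}$, and the inequality $2^{\nu_{2}(n-1)}\leq n-1$ to force $g<2^{2^{L-1}}$, and then invoke Theorem~\ref{CiLu} for finiteness. The only differences are cosmetic---you appeal to the lifting-the-exponent lemma where the paper carries out the telescoping factorization $(1+h)(1+h^{2})\cdots(1+h^{2^{s-1}})$ by hand, and you supply the short parity argument showing $n$ is odd, which the paper asserts (via $s\geq 1$) without justification.
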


\begin{proof}
Consider $g$ and $n$ such that $\displaystyle b_{g,n}\in B(L)$ satisfies the Lehmer property and let $K$ denote the number of its prime divisors. Let us write $n-1=2^{s}(2m+1)$, where $s=\nu_{2}(n-1)\geq 1$ and $m$ is a non-negative integer. Then
\begin{align*}
b_{g,n}= & g^{n-1}+\ldots +g+1=g\cdot(g^{n-2}+\ldots +1)+1=g\cdot\frac{g^{2m+1}-1}{g-1}\cdot\frac{g^{n-1}-1}{g^{2m+1}-1}+1 \\ 
 = & g\cdot(g^{2m}+\ldots +g+1)\cdot\frac{g^{n-1}-1}{g^{2m+1}-1}+1.
\end{align*}
Moreover,
\begin{align*}
2^{K}\mid \phi (b_{g,n})\mid b_{g,n}-1=g\cdot (g^{2m}+\ldots +g+1)\cdot \frac{g^{n-1}-1}{g^{2m+1}-1}.
\end{align*}
Since $g(g^{2m}+\ldots +g+1)$ is odd, we have 
\begin{align}\label{dupa}
K\leq \nu_{2}\left(\frac{g^{n-1}-1}{g^{2m+1}-1}\right).
\end{align}
Let $h:=g^{2m+1}$. Then
\begin{align*}
\frac{g^{n-1}-1}{g^{2m+1}-1}&=1+h+h^{2}+\ldots +h^{2^{s}-1}=\\&=(1+h)(1+h^{2})(1+h^{4})\ldots (1+h^{2^{s-2}})(1+h^{2^{s-1}}).
\end{align*}
Observe, that for any odd number $t$ we have $\nu_{2}(t^{2}+1)=1$. Thus
\begin{align*}
\nu_{2}\left(\frac{g^{n-1}-1}{g^{2m+1}-1}\right)&=\nu_{2}(g^{2m+1}+1)+s-1=\\&=
\nu_{2}\left((g+1)(g^{2m}-g^{2m-1}+\ldots -g+1)\right)+s-1=\nu_{2}(g+1)+s-1.
\end{align*}
Thus from (\ref{dupa}) we get
$
K\leq \nu_{2}(g+1)+\nu_{2}(n-1)-1.
$
Also, from Theorem \ref{ThmUp} we infer that
\begin{align*}
g^{n-1}<b_{g,n}<2^{2^{K}}\leq 2^{2^{\nu_{2}(g+1)+\nu_{2}(n-1)-1}},
\end{align*}
so
\begin{align*}
n-1<\frac{\log 2}{\log g}\cdot 2^{\nu_{2}(g+1)-1+\nu_{2}(n-1)}\leq \frac{\log 2}{\log g}\cdot 2^{\nu_{2}(g+1)-1}(n-1)\leq \frac{\log 2}{\log g}\cdot 2^{L-1}(n-1).
\end{align*}
Equivalently, $g<2^{2^{L-1}}$. 

Finally, the finiteness statement follows from Theorem \ref{CiLu}.

\end{proof}

\begin{rem}
From the proof of previous results we see that if $b_{g,n}$ has the Lehmer property and $n-1=2^{s}(2m+1)$, then 
\begin{align*}
2m+1<\frac{\log 2}{\log g}\cdot 2^{L-1}.
\end{align*}
\end{rem}

From Theorems \ref{RepEven} and \ref{RepOdd} we get effective bounds for $n$ and $g$ in terms of $L$. In the case of even $g$ they are given explicitly in the proof. In the case of odd $g$ we showed that $g<2^{2^{L-1}}$ and used the result of Luca and Cilleruello, where the explicit bound for $n$ is proved. Thus, gathering these results we have the following obvious:

\begin{cor}\label{pimpek}
For a fixed $L\geq 1$, there are only finitely many numbers with the Lehmer property in the set
$$
\left\{\frac{g^{n}-1}{g-1}\ \bigg|\ g,n\in\N,\ \nu_{2}(g)+\nu_{2}(g+1)\leq L \right\}.
$$
All of them are effectively computable.
\end{cor}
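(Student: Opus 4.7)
The plan is to decompose the set in the corollary according to the parity of $g$ and then invoke the two preceding theorems. Any admissible $g$ must satisfy $g\geq 2$ so that $\frac{g^{n}-1}{g-1}$ is a well-defined positive integer, and since $g$ and $g+1$ are consecutive integers exactly one of them is even. Hence exactly one of $\nu_{2}(g)$ and $\nu_{2}(g+1)$ is positive while the other vanishes, and the constraint $\nu_{2}(g)+\nu_{2}(g+1)\leq L$ splits cleanly into either $1\leq \nu_{2}(g)\leq L$ (when $g$ is even) or $1\leq \nu_{2}(g+1)\leq L$ (when $g$ is odd). In other words, the set in the corollary is exactly the union $A(L)\cup B(L)$ introduced in Section~3.

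From here I would simply invoke the two results. Theorem \ref{RepEven} yields $g<2^{2^{L}}$ together with the explicit bound $n<\frac{2^{L}\log 2}{\log g}+1$, so there are only finitely many admissible pairs $(g,n)$ and hence only finitely many Lehmer numbers in $A(L)$, all effectively computable. Theorem \ref{RepOdd} gives $g<2^{2^{L-1}}$ in the odd case; for each of the finitely many resulting values of $g$, Theorem \ref{CiLu} of Cilleruelo and Luca supplies an effectively computable bound on $n$.

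Combining the two cases, the Lehmer numbers in the given set form a finite union of finite sets, and since every bound invoked is effective they are effectively computable. The only step requiring any thought is the parity dichotomy that reduces the two-valuation constraint to the single-valuation constraints already handled by Theorems \ref{RepEven} and \ref{RepOdd}; once that observation is in place the corollary is essentially a bookkeeping statement and presents no real obstacle.
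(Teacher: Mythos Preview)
Your proposal is correct and follows exactly the approach the paper intends: the paragraph preceding the corollary explicitly says that gathering Theorems \ref{RepEven} and \ref{RepOdd} (together with the Cilleruelo--Luca bound for $n$ in the odd case) makes the corollary ``obvious'', and your parity dichotomy showing the set equals $A(L)\cup B(L)$ is precisely that gathering spelled out.
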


Corollary \ref{pimpek} improves Theorem \ref{CiLu} because the parameter $g$ can take values from an infinite set. Our idea was to show, that in fact Lehmer numbers can appear only for finitely many values of $g$. This generalization was impossible to get using the bound $n<K^{2^{K}}$ since it implied only the inequality of the form $n<1+\left\lfloor \frac{g\log (\nu_{2}(g))}{\log g}\right\rfloor$ (see the proof of Theorem \ref{CiLu} in the even case) and the expression on the right is unbounded when $g$ goes to infinity.


\begin{thebibliography}{20}
\bibitem{CoHa} G. Cohen, P. Hagis {\it On the number of prime factors of $n$ if $\phi(n)\mid n-1$}, Nieuw Arch. Wisk \textbf{3.28} (1980): 177--185.

\bibitem{CiLuca} J. Cilleruelo, F. Luca, {\it Repunit Lehmer numbers}, Math. Comp., \textbf{85} (2016) (297): 357--377.

\bibitem{FaLu} B. Faye, F. Luca, {\it Pell numbers with the Lehmer property}, Afr. Mat.  \textbf{28} (2017): 291--294.

\bibitem{GrauLu} J. Grau Ribas, F. Luca, {\it Cullen numbers with the Lehmer property}, Proc. Amer. Math. Soc. \textbf{140} (2012), no. 1, 129--134.

\bibitem{KimOh} D.-J. Kim, B.-K. Oh, {\it Generalized Cullen numbers with the Lehmer property}, Bull. Korean Math. Soc. \textbf{50} (2013), No. 6, pp. 1981--1988.

\bibitem{Luca} F. Luca, {\it Fibonacci numbers with the Lehmer property}, Bull. Polish Acad. Sci. Math. \textbf{55} (2007): 7--15.

\bibitem{Niels} P. Nielsen, {\it Odd perfect numbers, Diophantine equations, and upper bounds}, Math. Comp. \textbf{84} (2015), 2549--2567. 

\bibitem{Pom} C. Pomerance, {\it On composite $n$ for which $\phi(n)\mid n-1$ \textup{II}}, Proc. Amer. Math. Soc. \textbf{140} (2012), 129--134.

\bibitem{Re} J. Renze, {\it Computational evidence for Lehmer’s totient conjecture}, Published electronically at \href{http://library.wolfram.com/infocenter/MathSource/5483/}{http://library.wolfram.com/ infocenter/MathSource/5483/}, 2004.
\end{thebibliography}
\end{document}